\newtheorem{theorem}{Theorem}
\newtheorem{lemma}[theorem]{Lemma}
\newtheorem{corollary}[theorem]{Corollary}
\newtheorem{definition}[theorem]{Definition}
\newcommand{\odd}{\mathrm{odd}}
\newcommand{\F}{\mathcal F}
\renewcommand{\mod}{\;\mathrm{mod}\;}
\title[Residues in the Young-Fibonacci lattice]{Residues modulo powers of two\\in the Young-Fibonacci lattice}
\author{N.~Karimilla Bi}
\address{The Institute of Mathematical Sciences (HBNI), Chennai}
\email{karimilla.riasm@gmail.com}
\author{Amritanshu Prasad} 
\address{The Institute of Mathematical Sciences (HBNI), Chennai}
\email{amri@imsc.res.in}
\author{P.~Giftson Santhosh}
\address{Ramanujan Institute for Advanced Study in Mathematics, University of Madras, Chennai}
\email{giftsoncbe2@gmail.com}
\keywords{Young-Fibonacci lattice, differential graded poset, equidistribution of residues, $f$-statistic, Macdonald tree}
\subjclass[2010]{06A10, 11N69, 05A15}
\begin{document}
\maketitle
\begin{abstract}
  We study the subgraph of the Young-Fibonacci graph induced by elements with odd $f$-statistic (the $f$-statistic of an element $w$ of a differential graded poset is the number of saturated chains from the minimal element of the poset to $w$).
  We show that this subgraph is a binary tree.
  Moreover, the odd residues of the $f$-statistics in a row of this tree equidistibute modulo any power two.
  This is equivalent to a purely number theoretic result about the equidistribution of residues modulo powers of two among the products of distinct odd numbers less than a fixed number.
\end{abstract}
\section{Introduction}
The Young-Fibonacci lattice was introduced by Stanley in \cite{sta-diffpo}.
He presented it as an example of a differential graded poset which is different from Young's lattice.
It shares many combinatorial properties with Young's lattice but, for many purposes, appears to be an easier object to understand.
For instance, the number of elements of rank $n$ in the Young-Fibonacci lattice is given by the $n$th Fibonacci number:
\begin{displaymath}
  F_n = (\phi^n - (-\phi)^{-n})/{\sqrt 5}, \text{ where } \phi = (1+\sqrt 5)/2.
\end{displaymath}
But the number of elements of rank $n$ in Young's lattice is $p_n$, the number of integer partitions of $n$, a function for which no simple closed formula is known to date.

The $f$-statistic of an element of a differential graded poset with minimal element $\emptyset$ is defined recursively by the rules:
\begin{equation}
  \label{eq:1}
  f_\emptyset = 1, \quad f_w = \sum_{v\in w^-} f_v,
\end{equation}
where $w^-$ denotes the set of elements that are covered by $w$.
Many of the enumerative properties of differential posets described in \cite{sta-diffpo} concern the numbers $f_w$.

The Hasse diagram of Young's lattice is known as Young's graph.
The subgraph induced in Young's graph by the partitions $\lambda$ for which $f_\lambda$ is odd was studied in \cite{APS}.
It was shown that this subgraph is a binary tree with a simple recursive structure.
This recursive structure may be viewed as an analog of the well-known relationship between odd entries of Pascal's triangle and the Sierpi\'nski fractal.
Granville \cite{granville1992zaphod} found recursive patterns for odd residue classes modulo powers of two in Pascal's triangle.
However, we were unable to find a recursive description for the residue classes $\pm 1(\mathrm{mod} 4)$ of $f_\lambda$, for $\lambda$ in Young's graph.
There is no obvious pattern for the number of times the residues $+1$ and $-1$ modulo $4$ occur in the rows of Young's graph.

The Hasse diagram of the Young-Fibonacci lattice is called the Young-Fibonacci graph.
In this article, we show that the subgraph induced in the Young-Fibonacci graph  by elements $w$ with $f_w$ odd is again a binary tree with a very simple structure (Theorems~\ref{theorem:macdonald-tree} and~\ref{sec:macdonald-tree-recursive}).
Also, the corresponding $f$-numbers in this tree follow a simple pattern (Theorem~\ref{theorem:f-val}).
Moreover, we show that the odd residue classes of the numbers $f_w$ modulo any power of $2$ eventually equidistribute as $w$ runs over a row of the Young-Fibonacci graph (Theorem~\ref{theorem:main}).
This result turns out to be equivalent to a purely number-theoretic result about the residues of products of distinct odd numbers modulo powers of two (Corollary~\ref{corollary:number-theory}).

\section{The Young-Fibonacci Graph}
Let $F(n)$ denote the set of all words of the form $w=x_1\dotsc x_l$ such that $x_i$ is either $1$ or $2$, and $\sum x_i = n$.
Let $F = \coprod_{n\geq 0} F(n)$.
Note that $F(0)$ has one element, namely the empty word, denoted $\emptyset$.
If $w\in F(n)$ we say that $w$ has rank $n$, and write $r(w)=n$.

The Young-Fibonacci graph (see Figure~\ref{yf}) is the graph with vertex set $F$, and an edge between $v\in F(n)$ and $w\in F(n+1)$ if one of the following holds:
\begin{enumerate}
\item $v$ is obtained from $w$ by changing a $2$ that has no $1$ to its left into a $1$.
\item $v$ is obtained from $w$ by removing its leftmost $1$.
\end{enumerate}
In this case we write $v\in w^-$, or $w\in v^+$.
\begin{figure}[htp]
  \centering
  \hspace{-0.5cm}
  \includegraphics[width=1.2\textwidth]{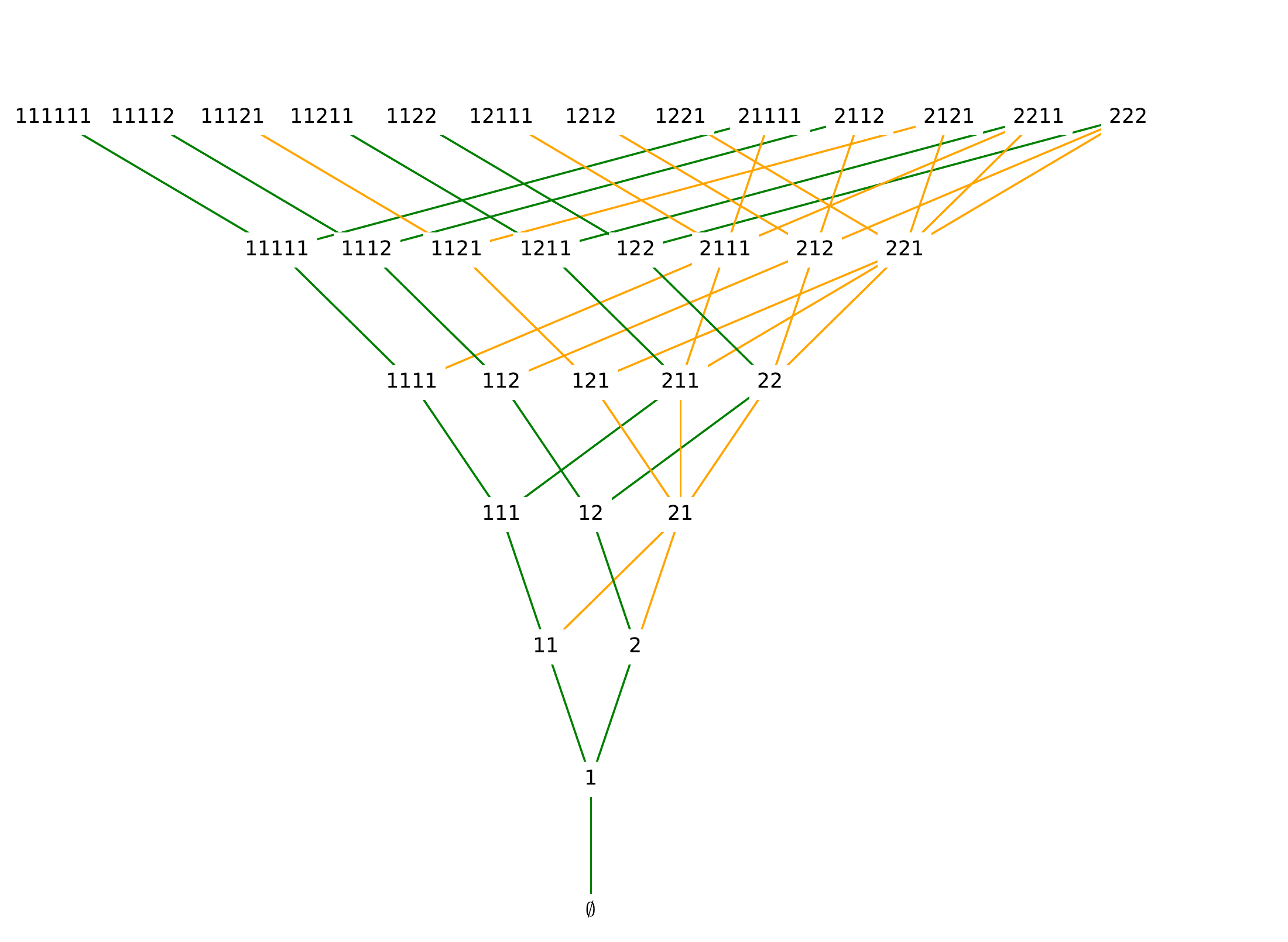}  
  \caption{The Young-Fibonacci Graph}
  \label{yf}
\end{figure}
The set $F$ can be viewed as a partially ordered set in which $w$ covers $v$ if and only if $w\in v^+$.
This poset is known as the Young-Fibonacci lattice.

The following result (see \cite[Proposition~2.3]{nzeutchap}) is an analog of the hook-length formula for the Young-Fibonacci lattice:
\begin{lemma}
  Consider a word $w$ of the form $x_1x_2\dotsb x_l$, where each $x_i$ is either $1$ or $2$.
  Then
  \begin{equation}
    \label{eq:2}
    f_w = \prod_{\{i\mid x_i = 2\}} \left(\sum_{j=i}^l x_j -1\right).
  \end{equation}
\end{lemma}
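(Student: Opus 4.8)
The plan is to prove \eqref{eq:2} by induction on the rank $r(w)$, which is the natural strategy since the $f$-statistic is defined by the recursion \eqref{eq:1}. The base case $w=\emptyset$ is immediate, both sides being $1$ (the product on the right is empty). For a word $w=x_1\dotsb x_l$ of positive rank I would call $\sum_{j=i}^l x_j - 1$ the \emph{hook} of a letter $x_i=2$, and split the inductive step according to the leftmost letter of $w$.

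If $w=1w'$, then every $2$ in $w$ has a $1$ to its left, so no cover of the first type is available and $w^-=\{w'\}$; hence $f_w=f_{w'}$. Deleting the leading $1$ shifts each $2$ one place to the left without altering the suffix sum above it, so the hook of every $2$ is preserved, and the formula for $w'$ (valid by induction) gives the formula for $w$ verbatim.

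The substantive case is when $w$ begins with a $2$. Here I would write $w=2^k y$, where $k\ge 1$ and $y$ is either empty or begins with a $1$. Then $w^-$ consists of the $k$ words $v_j=2^{j-1}12^{k-j}y$ obtained by flipping the $j$-th initial $2$ to a $1$ (for $1\le j\le k$), together with, in the case $y=1z$, the word $v_0=2^k z$ obtained by deleting the leftmost $1$. Writing $c_i$ for the hook in $w$ of the $i$-th initial $2$, the key point is that each of these covers preserves the $2$'s lying inside $y$ and their hooks (up to a harmless re-indexing), so that every $f_{v_j}$ factors as $f_y$ times a product over the initial block. Applying the inductive hypothesis and tracking how the suffix sums move, flipping the $j$-th $2$ removes the hook $c_j$, lowers $c_1,\dotsc,c_{j-1}$ by one, and leaves $c_{j+1},\dotsc,c_k$ intact. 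This reduces the desired identity $f_w=\bigl(\prod_{i=1}^k c_i\bigr)f_y$ to an algebraic statement purely about the numbers $c_i$.

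That statement is the telescoping identity
\begin{equation*}
  \sum_{j=1}^k \Bigl(\prod_{i=1}^{j-1}(c_i-1)\Bigr)\Bigl(\prod_{i=j+1}^k c_i\Bigr)
  = \prod_{i=1}^k c_i - \prod_{i=1}^k (c_i-1),
\end{equation*}
whose left-hand side is exactly the sum of the contributions of $v_1,\dotsc,v_k$. It then remains to absorb the correction term $\prod_{i=1}^k(c_i-1)$: when $y$ is empty the smallest hook $c_k$ equals $1$, so this term vanishes; and when $y=1z$ one checks that the initial-block hooks of $v_0$ are precisely $c_1-1,\dotsc,c_k-1$ while $f_y=f_z$, so that $f_{v_0}=\bigl(\prod_{i=1}^k(c_i-1)\bigr)f_y$ supplies exactly the missing term. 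I expect the main obstacle to be the bookkeeping of how each suffix sum shifts under the two kinds of cover — ensuring the flipped or deleted letter moves the correct hooks by the correct amount — rather than the telescoping, which is routine once the contributions are in the displayed form.
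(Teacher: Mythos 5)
Your proof is correct, and it takes a genuinely different route through the inductive step than the paper does. Both arguments are inductions on rank and both dispose of the case $w=1v$ in the same way, but for words beginning with a $2$ the paper peels off only a \emph{single} leading $2$: writing $w=2v$, it observes that $w^- = \{1v\}\sqcup\{2u \mid u\in v^-\}$, applies the inductive hypothesis to each word $2u$ (giving $f_{2u}=(r(u)+1)f_u$), and then reuses the defining recursion $f_v=\sum_{u\in v^-}f_u$ to collapse the sum to $f_w=(r(v)+1)f_v$, which is exactly formula (\ref{eq:2}) for $w$ once the inductive hypothesis for $v$ is invoked. This avoids any explicit enumeration of covers, any hook bookkeeping, and any telescoping: the cancellation you arrange by hand is absorbed into the recursive reuse of $f_v$. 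Your proof instead peels off the \emph{entire} initial block of $2$'s, writing $w=2^k y$, enumerates all $k$ (or $k+1$) covers explicitly, tracks how each hook $c_i=r(w)-2i+1$ shifts under each cover, and sums the contributions via the telescoping identity $\sum_{j=1}^k \bigl(\prod_{i<j}(c_i-1)\bigr)\bigl(\prod_{i>j}c_i\bigr)=\prod_i c_i - \prod_i(c_i-1)$, with the boundary term either vanishing (when $y$ is empty, since $c_k=1$) or supplied by the cover $v_0=2^kz$ (when $y=1z$). In effect your argument is the paper's induction unrolled along the initial block: what the paper handles by a one-step recursion, you make fully explicit. The paper's version buys brevity and hides all computation; yours buys transparency — it exhibits the complete cover structure of $w^-$ and makes the cancellation visible — at the cost of the bookkeeping you yourself flag as the main burden. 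All of your claims check out: the enumeration of $w^-$ for $w=2^ky$ is exactly right under the two covering rules, the hook shifts are as you state, and the telescoping identity is elementary.
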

\begin{proof}
  We prove the result by induction on the rank of $w$.
  If $r(w)=0$, the result is obviously true.
  If $w=1v$ for some $v$, then $w^-$ consists of the single element $v$, and so $f_w=f_v$.
  Since $v$ and $w$ differ only by a leftmost $1$, the right hand side of (\ref{eq:2}) is the same for both $v$ and $w$, so the Lemma follows for $w$.

  On the other hand, if $w = 2v$, then we wish to show that
  \begin{displaymath}
    f_w = (r(v)+1)f_v.
  \end{displaymath}
  We have:
  \begin{displaymath}
    w^- = \{1v\}\coprod \{2u\mid u\in v^-\},
  \end{displaymath}
  so that
  \begin{align*}
    f_w & = f_{1v} + \sum_{u\in v^-} f_{2u}\\
    & = f_v + \sum_{u\in v^-} [r(u) + 1]f_u & \text{[by induction]}\\
    & = f_v + [r(u) + 1] f_v\\
    & = [r(v) + 1] f_v, &\text{[since $r(u) + 1 = r(v)$]}
  \end{align*}
  as required.
\end{proof}
\begin{definition}
  [Odd word]
  We say that $w\in F(n)$ is an odd word if $f_w$ is odd.
  The subgraph induced in $F(n)$ by the set of odd words is denoted by $F_\odd(n)$.
\end{definition}
Thus $w$ is an odd word if and only if each of the factors on the right hand side of (\ref{eq:2}) is odd, which means that the number of occurrences of $1$ to the right of each $2$ in $w$ is even.
This may be restated in the following form:
\begin{corollary}
  \label{corollary:odd-words}
  A word $w\in F$ is odd if and only if it is of the form $a_0\dotsc a_{k-1}$ or $1a_0\dotsc a_{k-1}$, where each $a_i$ is either $2$ or $11$.
\end{corollary}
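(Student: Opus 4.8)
The plan is to start from the reformulation established just before the corollary: a word $w$ is odd if and only if the number of $1$'s occurring to the right of each $2$ in $w$ is even. I would translate this into a statement about the runs of $1$'s between consecutive $2$'s. Writing
\[
  w = 1^{b_0}\, 2\, 1^{b_1}\, 2 \cdots 2\, 1^{b_m},
\]
where $m$ is the number of $2$'s and $b_0, b_1, \dots, b_m \geq 0$ are the lengths of the maximal runs of $1$'s (with $b_0$ the run preceding the first $2$), the number of $1$'s lying to the right of the $t$-th $2$ is exactly $b_t + b_{t+1} + \cdots + b_m$, since the intervening $2$'s contribute no $1$'s.

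The key step is then a short telescoping argument. The oddness condition demands that $b_t + b_{t+1} + \cdots + b_m$ be even for every $t = 1, \dots, m$. Reading these constraints from the rightmost $2$ inward, the case $t = m$ forces $b_m$ even, the case $t = m-1$ forces $b_{m-1} + b_m$ even and hence $b_{m-1}$ even, and subtracting consecutive constraints in general forces each $b_t$ even for $t = 1, \dots, m$. Thus oddness of $w$ is equivalent to $b_1, \dots, b_m$ all being even, while $b_0$ remains completely unconstrained.

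Finally I would read off the two normal forms by splitting on the parity of the leftmost run $b_0$. If $b_0$ is even, then every run $b_0, b_1, \dots, b_m$ is even, so $w$ is a concatenation of the blocks $2$ and $11$ and has the form $a_0 \cdots a_{k-1}$. If $b_0$ is odd, then stripping the leftmost $1$ leaves a run of even length $b_0 - 1$ followed by the same even runs, so $w$ has the form $1 a_0 \cdots a_{k-1}$ with each $a_i \in \{2, 11\}$. The converse is immediate, since any word of either shape has all of $b_1, \dots, b_m$ even and hence satisfies the evenness condition. I expect no serious obstacle here; the only points requiring care are the backward-induction bookkeeping in the telescoping step and the separate treatment of the leftmost run $b_0$, which is precisely the one position whose parity is free and which therefore accounts for the two alternative forms in the statement.
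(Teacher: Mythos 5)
Your proposal is correct and follows the same route as the paper: the paper derives from the product formula~(\ref{eq:2}) that $w$ is odd exactly when the number of $1$'s to the right of each $2$ is even, and then simply asserts the corollary as a restatement of that condition. Your run-length decomposition and telescoping argument merely make explicit the bookkeeping that the paper treats as immediate, so there is no gap and no genuinely different idea.
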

Note that, in the above result, $k=\lfloor r(w)/2\rfloor$.

When $w$ is as in the above corollary, $f_w$ can be computed easily (it is convenient to reverse the order of the indices):
\begin{corollary}
  \label{lemma:f-value}
  Let $w$ be of the form $a_{k-1}\dotsb a_1a_0$ or $a_{k-1}\dotsb a_1a_0$, where $a_i$ is either $11$ or $2$ for each $i$.
  Then
  \begin{displaymath}
    f_w = (2k_1 + 1)(2k_2 + 1) \dotsb (2k_r + 1),
  \end{displaymath}
  where $\{k_1,\dotsc,k_r\}$ is the set of those values of $i$ for which $a_i=2$.
\end{corollary}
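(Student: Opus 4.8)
The plan is to apply the hook-length formula (\ref{eq:2}) directly to a word $w$ of the stated block form and to read off each factor explicitly. Write $w = a_{k-1}\dotsb a_1 a_0$ with each $a_i \in \{11,2\}$ (the case of a leading $1$ is deferred to the end). The formula (\ref{eq:2}) takes a product over the positions of $w$ carrying a $2$; since every such $2$ is a block $a_i$ with $a_i = 2$, the product is naturally indexed by the set $\{i \mid a_i = 2\} = \{k_1,\dotsc,k_r\}$, and it suffices to identify the factor attached to each such block.

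First I would compute the factor contributed by a single block $a_i = 2$. By (\ref{eq:2}) this factor is $\bigl(\sum_j x_j\bigr) - 1$, where the inner sum runs over the $2$ itself together with all letters lying to its right in $w$. Under the right-to-left labeling, the letters to the right of the block $a_i$ are precisely those forming the blocks $a_{i-1}, a_{i-2}, \dotsc, a_0$, and each of these blocks contributes $2$ to the sum, whether it equals $11$ or $2$. Since there are exactly $i$ such blocks, the letters to the right sum to $2i$; adding the $2$ from the block $a_i$ gives a suffix sum of $2i+2$, and subtracting $1$ yields the factor $2i+1$.

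Multiplying these factors over all blocks equal to $2$ then gives
\[
  f_w = \prod_{\{i\mid a_i = 2\}} (2i+1) = (2k_1+1)(2k_2+1)\dotsb(2k_r+1),
\]
which is the asserted formula. Finally, for a word of the form $1a_{k-1}\dotsb a_0$ the leading $1$ sits to the left of every $2$, so it enters none of the suffix sums appearing in (\ref{eq:2}); equivalently, $f_{1v} = f_v$ as already noted in the proof of the hook-length formula. Hence the same product results, and the formula holds in both cases.

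I do not anticipate a genuine obstacle here, since the result is essentially a transcription of (\ref{eq:2}). The only point demanding care is the bookkeeping of the indexing convention: one must verify that under the right-to-left labeling $a_{k-1}\dotsb a_0$ exactly $i$ blocks lie to the right of $a_i$, as it is this count that turns the suffix sum into the clean factor $2i+1$.
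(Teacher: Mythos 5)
Your proof is correct and matches the paper's intent exactly: the paper states this corollary as an immediate consequence of the hook-length formula (\ref{eq:2}) with the indices reversed, which is precisely the suffix-sum bookkeeping you carry out (each block to the right of a $2$ contributes $2$, giving the factor $2i+1$, and a leading $1$ enters no suffix sum). Nothing is missing; the paper simply leaves this computation implicit.
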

As an immediate consequence of Corollary~\ref{corollary:odd-words}, we get:
\begin{corollary}
  The number of odd words in $F(n)$ is $2^{\lfloor n/2\rfloor}$.
\end{corollary}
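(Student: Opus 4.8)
The plan is to derive the count directly from the structural characterization in Corollary~\ref{corollary:odd-words}, treating it as a bijective counting problem. By that corollary, every odd word of rank $n$ is built out of the two blocks $2$ and $11$, possibly preceded by a single leading $1$. Since each of the blocks $2$ and $11$ contributes exactly $2$ to the rank, the natural first move is to split the argument according to the parity of $n$.

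First I would handle even $n$, say $n = 2m$. A word of the form $1a_0\dotsb a_{k-1}$ has odd rank $1 + 2k$ and so cannot have rank $n$; thus only words of the form $a_0\dotsb a_{k-1}$ survive, forcing $k = m$. Conversely, any choice of $(a_0,\dotsc,a_{m-1}) \in \{2,11\}^m$ produces such a word, giving $2^m$ possibilities. For odd $n = 2m+1$ the roles reverse: only the forms carrying a leading $1$ can achieve odd rank, again with $k = m$, yielding $2^m$ words. In both cases $m = \lfloor n/2\rfloor$, so the count is $2^{\lfloor n/2\rfloor}$, as claimed.

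The one point that must be verified rather than assumed is that the assignment $(a_0,\dotsc,a_{k-1}) \mapsto a_0\dotsb a_{k-1}$ (and its leading-$1$ variant) is injective, so that distinct block sequences give distinct words and nothing is double-counted. I expect this to be the only genuine, if minor, obstacle, and I would dispose of it by reading the word from left to right: the first letter determines the first block unambiguously (a leading $2$ forces the block $2$, while a leading $1$ must be followed by another $1$ and hence forms the block $11$), after which one recurses on the remainder, establishing uniqueness of the block decomposition. Combined with the parity observation that the two forms never coincide for a fixed $n$, this makes the counting exact and the corollary follows.
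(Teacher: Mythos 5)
Your proof is correct and takes the same route the paper intends: the paper states this count as an immediate consequence of Corollary~\ref{corollary:odd-words}, and your parity split together with the enumeration of block sequences $(a_0,\dotsc,a_{m-1})\in\{2,11\}^{m}$, $m=\lfloor n/2\rfloor$, is exactly the argument being left implicit. Your additional check that the block decomposition is unique (so no word is counted twice) is sensible added care rather than a different approach.
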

\section{The Macdonald Tree in the Young-Fibonacci Graph}
The subgraph in Young's graph induced by odd partitions is a binary tree.
This tree was called the Macdonald tree in \cite{APS}.
Figure~\ref{yf} indicates that an analogous result holds for the Young-Fibonacci graph.
The edges joining odd words are rendered in green, while the remaining edges are rendered in orange.
The green edges in Figure~\ref{yf} clearly form a binary tree.
The following theorem (see also \cite[Theorem~5]{APS}) is a direct consequence of Corollary~\ref{lemma:f-value}:
\begin{theorem}
  [Macdonald tree of the Young-Fibonacci graph]
  \label{theorem:macdonald-tree}
  If $n$ is even, and $w\in F_\odd(n)$, then $w^+\cap F_\odd(n+1) = \{1w\}$.
  If $n$ is odd, and $w\in F_\odd(n)$, then $w=1v$ for a unique element $v\in F_\odd(n-1)$ and $w^+\cap F_\odd(n+1) = \{11v, 2v\}$.

  Consequently, the subgraph $F_\odd$ induced in the Young-Fibonacci graph by odd words is a binary tree, where every node of even rank has one child, and every node of odd rank has two children.
  We call this tree the Macdonald tree of the Young-Fibonacci graph.
\end{theorem}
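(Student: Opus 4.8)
The plan is to first make the upper covers of an arbitrary word completely explicit, and then to read off which of them are odd using Corollary~\ref{corollary:odd-words}. The two rules defining the edges of the graph are phrased as operations producing a lower cover $v\in w^-$ from $w$, so to describe $w^+$ I must invert them. Writing any word as $w=2^au$, where $u$ is empty or begins with a $1$ (so that $a$ is the length of the leading run of $2$'s), inverting the rule that deletes the leftmost $1$ amounts to inserting a single $1$ into the leading block of $2$'s, which yields the covers $2^b1\,2^{a-b}u$ for $0\le b\le a$; inverting the rule that turns a $2$ into a $1$ amounts to changing the leftmost $1$ of $w$ into a $2$, which yields the cover $2^{a+1}u'$ whenever $u=1u'$. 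These exhaust $w^+$, and one checks against small cases (for example $1^+=\{2,11\}$ and $2^+=\{12,21\}$) that nothing has been missed.

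With $w^+$ in hand I would invoke the criterion that a word is odd exactly when the number of $1$'s to the right of each of its $2$'s is even. Assume first that $n$ is even, so that by Corollary~\ref{corollary:odd-words} $w=a_0\dotsm a_{k-1}$ with each $a_i\in\{2,11\}$; then the suffix $u$ is empty or begins with $11$, and in either case contains an even number of $1$'s. In the cover $2^b1\,2^{a-b}u$ with $b\ge 1$ a leading $2$ sees the freshly inserted $1$ on top of the even contribution from $u$, hence an odd count, so this cover is not odd; the remaining choice $b=0$ produces exactly $1w$, which is odd. In the cover $2^{a+1}u'$, available only when $w$ contains a $1$, the new $2$ was the first symbol of a $11$-block and so sees one $1$ beyond an even number, and is again not odd. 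Hence $w^+\cap F_\odd(n+1)=\{1w\}$.

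Assume next that $n$ is odd. By Corollary~\ref{corollary:odd-words} the word $w$ begins with a $1$, so $w=1v$ with $v\in F_\odd(n-1)$, and $v$ is unique because it is recovered by deleting the leading $1$. Here $a=0$, so inverting the deletion rule gives the single cover $11v$ while inverting the other rule gives $2v$; these are the only two covers of $w$. Both $11v=11\,a_0\dotsm a_{k-1}$ and $2v=2\,a_0\dotsm a_{k-1}$ have the even-rank shape of Corollary~\ref{corollary:odd-words} and are therefore odd, so $w^+\cap F_\odd(n+1)=\{11v,2v\}$.

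It remains to assemble the tree. The two computations show that every odd word of even rank has a single odd child $1w$, and every odd word of odd rank, uniquely written $1v$, has the two odd children $11v$ and $2v$. To see that $F_\odd$ is genuinely a tree I would compare children with the count $2^{\lfloor n/2\rfloor}$ of odd words in $F(n)$: passing from rank $n$ to $n+1$ the children produced above number exactly $2^{\lfloor (n+1)/2\rfloor}$, and each odd word of rank $n+1$ occurs among them, so every node has a unique parent. The step I expect to demand the most care is inverting the deletion of the leftmost $1$: this operation is far from injective, spawning the whole family $2^b1\,2^{a-b}u$, and the crux of the even case is the verification that inserting the new $1$ anywhere but at the very front introduces a $2$ with an odd number of $1$'s to its right and thereby destroys oddness.
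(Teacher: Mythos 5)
Your proof is correct and follows the same route the paper intends: the paper dismisses this theorem as a direct consequence of the characterization of odd words (Corollaries~\ref{corollary:odd-words} and~\ref{lemma:f-value}), and your argument simply supplies the details it leaves implicit, namely the explicit inversion of the two covering rules to describe $w^+$ as $\{2^b 1\, 2^{a-b}u : 0\le b\le a\}\cup\{2^{a+1}u'\}$, followed by the parity check on $1$'s to the right of each $2$. Your closing counting argument (comparing the $2^{\lfloor (n+1)/2\rfloor}$ distinct children produced against the total number of odd words of rank $n+1$) correctly yields the tree structure.
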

The first seven rows of the Macdonald tree of the Young-Fibonacci graph are shown in Figure~\ref{fig:mtyf}.
\begin{figure}
  \begin{displaymath} 
\resizebox{\textwidth}{!}{
        \xymatrix@R=2mm@C=2mm{ 
        \text{\tiny$1111111$}\ar@{-}[dd] &   & \text{\tiny$121111$}\ar@{-}[dd] & \text{\tiny$111211$}\ar@{-}[dd] & & \text{\tiny$12211$}\ar@{-}[dd] &  \text{\tiny$111112$} \ar@{-}[dd] & & \text{\tiny$12112$}\ar@{-}[dd]& \text{\tiny$11122$}\ar@{-}[dd] & &  \text{\tiny$1222$}\ar@{-}[dd]  \\
        &&&&&&&&&&&\\
       \text{\tiny$111111$} \ar@{-}[ddr] & & \text{\tiny$21111$} \ar@{-}[ddl] &  \text{\tiny$11211$} \ar@{-}[ddr] & & \text{\tiny$2211$} \ar@{-}[ddl] & \text{\tiny$11112$} \ar@{-}[ddr] & & \text{\tiny$2112$} \ar@{-}[ddl] & \text{\tiny$1122$} \ar@{-}[ddr]& & \text{\tiny$222$} \ar@{-}[ddl]  \\ 
        &&&&&&&&&&&\\
         & \text{\tiny$11111$}\ar@{-}[dd]  & & & \text{\tiny$1211$}\ar@{-}[dd] & & & \text{\tiny$1112$}\ar@{-}[dd] & & &  \text{\tiny$122$}\ar@{-}[dd] &  \\
           &&&&&&&&&&&\\
       & \text{\tiny$1111$}\ar@{-}[ddrr] & & & \text{\tiny$211$}\ar@{-}[ddl] &  & & \text{\tiny$112$}\ar@{-}[ddrr] & & & \text{\tiny$22$}\ar@{-}[ddl] \\
         &&&&&&&&&&&\\
         & & & \text{\tiny$111$}\ar@{-}[dd]  & & & & & & \text{\tiny$12$} \ar@{-}[dd]& & &\\
           &&&&&&&&&&&\\
         & & & \text{\tiny$11$}\ar@{-}[ddrrr] & &  & & & & \text{\tiny$2$} \ar@{-}[ddlll]& & &\\
           &&&&&&&&&&&\\
         & & & & & & \text{\tiny$1$} \ar@{-}[dd]   & & & &\\   
           &&&&&&&&&&&\\
         & & & & & & \text{\tiny$\emptyset$} & & & & &}
}
    \end{displaymath}
    \caption{Macdonald tree of the Young-Fibonacci graph}
    \label{fig:mtyf}
\end{figure}  
In comparison with the recursive structure of the Macdonald tree in Young's graph \cite[Section~4]{APS}, the recursive structure of the Macdonald tree in the Young-Fibonacci graph is very simple.
Let $F_\odd^w$ denote the induced subtree, rooted at $w$, consisting only of those nodes which have $w$ as an ancestor.
In other words, these are the nodes corresponding to elements $v\in F_\odd$ such that $v\geq w$ in the partial order, and $v$ is connected to $w$ by a saturated chain of $F$ that is contained in $F_\odd$. 
For any word $w\in F_\odd$, let $F_\odd w$ denote a copy of $F_\odd$, where each vertex label $v\in F_\odd$ is replaced by $vw$.
\begin{theorem}
  [Recursive Definition of the Macdonald Tree of the Young-Fibonacci Lattice]
  \label{sec:macdonald-tree-recursive}
  \begin{displaymath}
    F_\odd^w = 
    \vcenter{\vbox{
        \xymatrix{
           F_\odd 11w\ar@{-}[dr] & & F_\odd 2w\ar@{-}[dl] \\
           &1w \ar@{-}[d]&\\
           &w&}
      }}
  \end{displaymath}
\end{theorem}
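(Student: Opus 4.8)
The plan is to reduce everything to the local description of the Macdonald tree furnished by Theorem~\ref{theorem:macdonald-tree}, combined with the observation that right-concatenation by a word of even rank is a label-preserving isomorphism of the relevant structure. Note first that the displayed diagram presupposes that $w$ has even rank: by Theorem~\ref{theorem:macdonald-tree} a node of $F_\odd$ has a single child exactly when its rank is even, and this is precisely the situation drawn, with $w$ of even rank, its unique child $1w$ of odd rank, and the two children $11w$ and $2w$ of $1w$. (The root $\emptyset$ has even rank, and since $11w$ and $2w$ again have even rank whenever $w$ does, the recursion is self-consistent and generates the entire tree.) The bottom three levels of the diagram are therefore immediate from Theorem~\ref{theorem:macdonald-tree}, and it remains to identify the subtree of $F_\odd$ rooted at $w' := 11w$ (respectively $w' := 2w$) with the shifted copy $F_\odd w'$.

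To this end I would introduce the map $\phi_{w'}\colon v \mapsto vw'$ that appends $w'$ on the right of each word, and show that it restricts to an isomorphism of labelled trees from $F_\odd$ onto $F_\odd^{w'}$. The first thing to check is that $\phi_{w'}$ sends odd words to odd words, and here the even rank of $w'$ is essential. Since $w'$ is odd of even rank, Corollary~\ref{corollary:odd-words} expresses it as a concatenation of blocks $2$ and $11$ with no leading $1$; hence for any odd $v$ --- of the form (blocks) or $1$(blocks) --- the word $vw'$ is again of one of these two forms, and is therefore odd. The hypothesis that $w'$ carries no leading $1$ is exactly what prevents the concatenation from producing a $2$ with an odd number of $1$s to its right, which is what oddness forbids.

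The second point is that $\phi_{w'}$ preserves the parent--child relation of $F_\odd$. Appending $w'$ on the right commutes with the three left-prepending operations $v\mapsto 1v$, $v\mapsto 11v$, $v\mapsto 2v$ that produce children in Theorem~\ref{theorem:macdonald-tree}; and because $r(w')$ is even, the parity of $vw'$ agrees with that of $v$, so the branching prescribed by Theorem~\ref{theorem:macdonald-tree} is matched under $\phi_{w'}$: an even-rank $v$ with child $1v$ maps to an even-rank $vw'$ with child $(1v)w'$, while an odd-rank $v=1v_0$ with children $11v_0, 2v_0$ maps to an odd-rank $vw'$ with children $(11v_0)w'$ and $(2v_0)w'$.

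Finally I would confirm that the image of $\phi_{w'}$ is the whole subtree $F_\odd^{w'}$ rather than a proper part of it. The map is injective, since right-concatenation by a fixed word can be cancelled, and it sends the root $\emptyset$ to $w'$. The previous paragraph shows that the image is closed under passing to children in $F_\odd$, so it is a full subtree rooted at $w'$; as $F_\odd^{w'}$ is by definition precisely this full subtree, the two coincide and $\phi_{w'}$ is the claimed isomorphism. Assembling the two copies $F_\odd\,11w$ and $F_\odd\,2w$ above the edge structure $w \to 1w \to \{11w,2w\}$ then yields the displayed recursion. I expect the only genuinely delicate point to be the preservation of oddness under $\phi_{w'}$ --- equivalently, the closure of the image under children --- since this is exactly where the even-rank hypothesis on $w'$, and hence on $w$, is used.
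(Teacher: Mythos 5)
Your proof is correct and follows the route the paper intends: the paper states this theorem without any proof, presenting it as an immediate iteration of the branching rule in Theorem~\ref{theorem:macdonald-tree}, and your argument simply makes that deduction precise. In particular, your key verification --- that right-concatenation by an odd word of even rank (here $11w$ or $2w$) preserves oddness by Corollary~\ref{corollary:odd-words}, commutes with the branching rule, and hence gives an isomorphism of $F_\odd$ onto the subtree rooted at that word --- is exactly the detail the paper leaves implicit.
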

In order to prove the equidistribution of $f$-numbers of elements of $F_\odd(n)$, we replace the vertex labels in $F_\odd$ by their $f$-numbers:
\begin{definition}
  [$f$-valued Macdonald Tree]
  The $f$-valued Macdonald tree $\F_\odd$ of the Young-Fibonacci lattice is obtained from $F_\odd$ by replacing the label of each node $w\in F_\odd$ by $f_w$.
\end{definition}
Denote by $\F_\odd^w$ the induced subtree of the $f$-valued Macdonald tree with nodes consisting only of those nodes which have $w$ as an ancestor.
For any scalar $u$, let $u\F_\odd$ denote the tree with labelled nodes whose underlying unlabelled tree is $F_\odd$ and the node corresponding to $w$ is labelled by $uf_w$.
\begin{theorem}
  [Recursive Description of $f$-valued Macdonald Tree]
  \label{theorem:f-val}
  For each $w\in F_\odd(2m)$, we have:
  \begin{displaymath}
    \F_\odd^w = 
    \vcenter{\vbox{
        \xymatrix{
        f_w\F_\odd\ar@{-}[dr] & & (2m+1)f_w\F_\odd\ar@{-}[dl] \\
         &f_w\ar@{-}[d]& \\
          &f_w&   
               }}}
  \end{displaymath}
\end{theorem}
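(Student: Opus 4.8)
The plan is to take the \emph{shape} of the tree for granted from Theorem~\ref{sec:macdonald-tree-recursive}, which already shows that $\F_\odd^w$ consists of $w$ at the bottom, its unique child $1w$, and two branches rooted at $11w$ and $2w$ (with the left branch identified with $F_\odd\,11w$ and the right with $F_\odd\,2w$); what remains is to compute the labels. The two spine labels are immediate: since the only element covered by $1w$ is $w$, the defining recursion~(\ref{eq:1}) gives $f_{1w}=f_w$, so both the root $w$ and the node $1w$ carry the label $f_w$, exactly as in the displayed diagram. Thus the whole content of the theorem lies in identifying the two branches.

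For the branches I would work directly with the product formula of Corollary~\ref{lemma:f-value}. Write a typical node of the left branch as $v\,11w$ and the corresponding node of the right branch as $v\,2w$, where $v$ ranges over $F_\odd$. Applying~(\ref{eq:2}) to each of these words, the factors coming from the blocks of $w$ reproduce $f_w$, and, since the blocks of $v$ occupy the same heights in both words, their factors are identical in the two cases; the two products therefore differ only in the contribution of the block inserted immediately to the left of $w$. For $v\,11w$ this block is $11$ and contributes no factor, whereas for $v\,2w$ it is a single $2$ sitting directly to the left of $w$, whose factor in~(\ref{eq:2}) is $2+r(w)-1 = r(w)+1 = 2m+1$ — the same computation that yields $f_{2u}=(r(u)+1)f_u$ in the proof of the hook-length formula. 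Consequently
\begin{displaymath}
  f_{v\,2w} = (2m+1)\,f_{v\,11w}\qquad\text{for every }v\in F_\odd,
\end{displaymath}
so the right branch is obtained from the left branch by multiplying every label by the odd number $2m+1$; specializing to $v=\emptyset$ recovers $f_{11w}=f_w$ and $f_{2w}=(2m+1)f_w$ for the two branch roots.

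It then remains to identify the left branch, after dividing out its root label $f_w$, with the Macdonald tree $\F_\odd$. Both $11w$ and $2w$ lie in $F_\odd(2m+2)$, so each branch is again a tree of the form $\F_\odd^{\,\cdot}$ and is governed by the same decomposition. I would therefore set up an induction in which the displayed one–step identity is propagated up the tree: the labels of the left branch, divided by $f_w$, should be shown to obey the very recursion characterizing $\F_\odd$ (one child, then a split into a ``$11$'' subtree and a ``$2$'' subtree differing by an odd factor), and hence to agree with $\F_\odd$.

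The main obstacle is exactly this last identification, because the factor attached by~(\ref{eq:2}) to a given block depends on its height above $\emptyset$, and in a branch rooted at $11w$ or $2w$ these heights are shifted by $r(w)$ relative to their values in $\F_\odd$ itself. The cleanest way around this is not to compare absolute labels but to verify the one–step recursive identity above and then apply it recursively: at each branching the relevant ratio of labels is the odd number $2m+1$ determined purely by the rank of the current root, which is precisely the datum that the recursion of $\F_\odd$ records. Carrying out this recursive bookkeeping, together with the single application of Corollary~\ref{lemma:f-value} that isolates the factor $2m+1$, completes the proof.
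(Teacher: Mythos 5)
Your spine computation and your identity $f_{v\,2w}=(2m+1)\,f_{v\,11w}$ are correct, and they rest on exactly the two ingredients that the paper's own one-line proof invokes, namely Corollary~\ref{lemma:f-value} and Theorem~\ref{sec:macdonald-tree-recursive}. The special case $v=\emptyset$, giving $f_{11w}=f_w$ and $f_{2w}=(2m+1)f_w$, is in fact the only portion of Theorem~\ref{theorem:f-val} that the paper ever uses afterwards (in the proof of Theorem~\ref{theorem:main}).

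The gap is the step you defer to ``recursive bookkeeping'': identifying the left branch, divided by its root label $f_w$, with $\F_\odd$. No bookkeeping can close this gap, because that identification is false; the obstacle you flagged (the shift of heights by $r(w)$) is fatal, not technical. By~(\ref{eq:2}), the node $v\,11w$ of the left branch carries the label
\begin{displaymath}
  f_{v\,11w} \;=\; f_w\prod_{i\,:\,b_i=2}\bigl(2(m+1+i)+1\bigr),
\end{displaymath}
where $b_{j-1}\dotsb b_1b_0$ are the $2$/$11$-blocks of $v$; this is a \emph{shifted} version of $f_v=\prod_{i:b_i=2}(2i+1)$, not $f_wf_v$. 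Concretely, take $w=\emptyset$, $m=0$, $v=2$: the node $211$ has $f_{211}=3$, while the corresponding node of $f_\emptyset\F_\odd=\F_\odd$ is labelled $f_2=1$; and since the two rank-$4$ label multisets, $\{1,3\}$ versus $\{1,1\}$, already differ, no alternative choice of isomorphism can repair this. Indeed, with $w=\emptyset$ the theorem asserts that both branches of $\F_\odd$ are copies of $\F_\odd$ itself, which would force the rank-$4$ row of $\F_\odd$ to read $1,1,1,1$, whereas the paper's own figure of the $f$-valued tree shows $1,3,1,3$. So the statement you were asked to prove is wrong as written: what is true, and what your argument actually establishes, is that the two branches of $\F_\odd^w$ are isomorphic labelled trees whose corresponding labels are all in the ratio $2m+1$, with branch roots labelled $f_w$ and $(2m+1)f_w$. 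Your instinct at the obstacle was sound; the error lies in the theorem itself (and in the paper's one-line proof, which glosses over the same shift), not in your computation, but as a proof of the literal statement your attempt necessarily fails at precisely the point you identified.
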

\begin{proof}
  This theorem is a consequence of Corollary~\ref{lemma:f-value} and Theorem~\ref{sec:macdonald-tree-recursive}.
\end{proof}
\begin{figure}
  \label{fvalue}
 \begin{displaymath}
\resizebox{\textwidth}{!}{
         \xymatrix@R=2mm@C=2mm{ 
       \text{\tiny$1$} \ar@{-}[dd]&   & \text{\tiny$1 \times 5$} \ar@{-}[dd] & \text{\tiny$1 \times 3$} \ar@{-}[dd] & & \text{\tiny$1 \times 3 \times 5$} \ar@{-}[dd] & \text{\tiny$1$}  \ar@{-}[dd] & & \text{\tiny$1 \times 5$}\ar@{-}[dd]& \text{\tiny$1 \times 3$}\ar@{-}[dd] & & \text{\tiny$1 \times 3 \times 5$} \ar@{-}[dd]  \\ 
         &&&&&&&&&&&\\
       \text{\tiny$1$} \ar@{-}[ddr]& &\text{\tiny$1\times 5$} \ar@{-}[ddl] &  \text{\tiny$1 \times 3$} \ar@{-}[ddr] & & \text{\tiny$1 \times 3 \times 5$} \ar@{-}[ddl] & \text{\tiny$1$} \ar@{-}[ddr] & & \text{\tiny$1 \times 5$} \ar@{-}[ddl] & \text{\tiny$1 \times 3$} \ar@{-}[ddr]& & \text{\tiny$1 \times 3 \times 5$} \ar@{-}[ddl]  \\ 
         &&&&&&&&&&&\\
         & \text{\tiny$1$}\ar@{-}[dd]  & & & \text{\tiny$1 \times 3$}\ar@{-}[dd] & & & \text{\tiny$1$}\ar@{-}[dd] & & & \text{\tiny$1 \times 3$}\ar@{-}[dd]& \\
           &&&&&&&&&&&\\
         & \text{\tiny$1$}\ar@{-}[ddrr] & & & \text{\tiny$1 \times 3$}\ar@{-}[ddl] &  & & \text{\tiny$1$}\ar@{-}[ddrr] & & & \text{\tiny$1 \times 3$}\ar@{-}[ddl] \\
           &&&&&&&&&&&\\
         & & & \text{\tiny$1$}\ar@{-}[dd] & & & & & & \text{\tiny$1$}\ar@{-}[dd]& & &\\
           &&&&&&&&&&&\\
         & & & \text{\tiny$1$}\ar@{-}[ddrrr] & &  & & & & \text{\tiny$1$} \ar@{-}[ddlll]& & &\\
           &&&&&&&&&&&\\
         & & & & & & \text{\tiny$1$} \ar@{-}[dd]   & & & &\\   
           &&&&&&&&&&&\\
         & & & & & & \text{\tiny$1$} & & & & &}
}
\end{displaymath}
 \caption{$f$-valued Macdonald Tree}
\end{figure}
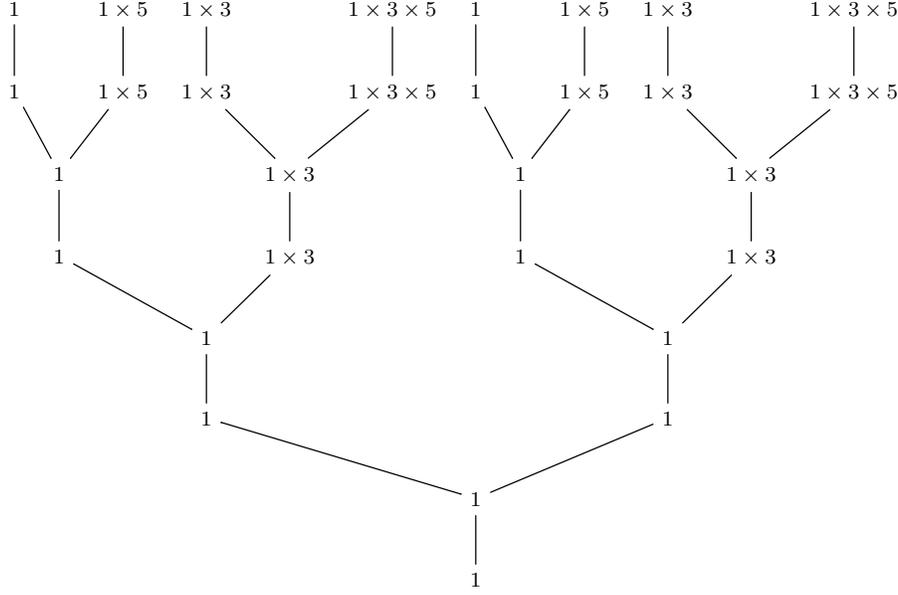
\section{Equidistribution of Residues}
We say that the residues modulo $2^k$ of
\begin{displaymath}
  \{f_w \mid w\in F_\odd(n)\}
\end{displaymath}
are equidistributed if the numbers
\begin{displaymath}
  m_i = |\{w\in F_\odd(n)\mid f_w \equiv i \mod 2^k\}|
\end{displaymath}
are the same for all odd integers $1\leq i \leq 2^k-1$.
In this section, we show that, for every positive integer $k$, the residues modulo $2^k$ of $\{f_w\mid w\in F_\odd(n)\}$ are equidistributed for sufficiently large $n$.
\begin{lemma}
  \label{corollary:one-step}
  Suppose the residues modulo $2^k$ of $\{f_w\mid w\in F_\odd(n)\}$ are equidistributed, then so are the residues modulo $2^k$ of $\{f_w\mid w\in F_\odd(n+1)\}$.
\end{lemma}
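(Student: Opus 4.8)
The plan is to split into two cases according to the parity of $n$, in each case using Theorem~\ref{theorem:macdonald-tree} to express the multiset of $f$-values at rank $n+1$ in terms of the multiset at rank $n$, and then checking that equidistribution is preserved by the resulting operation.

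First suppose $n$ is even. By Theorem~\ref{theorem:macdonald-tree}, every $w\in F_\odd(n)$ has a single child $1w\in F_\odd(n+1)$, and $w\mapsto 1w$ is a bijection $F_\odd(n)\to F_\odd(n+1)$. By Corollary~\ref{lemma:f-value} a leading $1$ contributes no factor, so $f_{1w}=f_w$. Hence the multiset $\{f_w\mid w\in F_\odd(n+1)\}$ coincides with $\{f_w\mid w\in F_\odd(n)\}$, and equidistribution transfers verbatim.

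The substantive case is $n$ odd, say $n=2m+1$. Here Theorem~\ref{theorem:macdonald-tree} says each $w\in F_\odd(n)$ equals $1v$ for a unique $v\in F_\odd(2m)$, with children $11v$ and $2v$ in $F_\odd(n+1)$. By Corollary~\ref{lemma:f-value} (equivalently Theorem~\ref{theorem:f-val}) we have $f_w=f_{1v}=f_{11v}=f_v$ while $f_{2v}=(2m+1)f_v$. Thus, as $w$ ranges over $F_\odd(n)$, the children $11v$ reproduce the multiset $\{f_w\mid w\in F_\odd(n)\}$, while the children $2v$ produce its image under multiplication by the odd integer $2m+1$; the rank-$(n+1)$ multiset is the disjoint union of these two.

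The observation that finishes the proof is that multiplication by an odd integer permutes the odd residues modulo $2^k$, these being exactly the invertible residues modulo $2^k$. Consequently, if $\{f_w\mid w\in F_\odd(n)\}$ has the same count $m_i$ in each odd class $i$, its image under multiplication by $2m+1$ has that same count in class $(2m+1)i$, hence is again equidistributed; summing the two multisets leaves every odd class at the common total. I do not expect a genuine obstacle: the only point needing care is that the scaling factor $2m+1$ is odd, and therefore invertible modulo $2^k$, which is precisely what keeps the second copy equidistributed.
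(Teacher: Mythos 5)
Your proof is correct and follows essentially the same route as the paper's: the even case is handled by the rank-preserving bijection $w\mapsto 1w$, and in the odd case the rank-$(n+1)$ multiset is decomposed as the rank-$n$ multiset together with its image under multiplication by the unit $2m+1$ in $(\mathbf Z/2^k\mathbf Z)^*$, which preserves equidistribution. The only difference is cosmetic: you spell out the tree-structure justification via Theorem~\ref{theorem:macdonald-tree} and Corollary~\ref{lemma:f-value}, which the paper leaves implicit.
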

\begin{proof}
  If $n$ is even, then the $f$-numbers in $F(n)$ and $F(n+1)$ remain the same.
  If $n = 2m+1$ is odd then,
  as multisets,
  \begin{multline*}
    \{f_w \mod 2^k\mid w\in F_\odd(n+1)\} = \{f_w\mod 2^k \mid w \in F_\odd(n)\} \\
    \coprod \{(2m+1)f_w\mid w\in F_\odd(n)\}.
  \end{multline*}
  Since $2m+1$ is a unit in $\mathbf Z/2^k\mathbf Z$, multiplication by it is a permutation of the multiplicative group $(\mathbf Z/2^k\mathbf Z)^*$.
  It follows that the residues in the second subset coincide with those in the first one, which are equidistributed, and so the corollary holds.
\end{proof}
Next we shall determine, for each $k$, a value of $n$ for which the residues $\{f_w \mod 2^k\mid w\in F_\odd(n)\}$ are equidistributed.
\begin{theorem}
  [Main Theorem]
  \label{theorem:main}
  If $n\geq 2^{k-1}+2$, then the residues
  \begin{equation}
    \label{eq:3}
    \{f_w \mod 2^k \mid w \in F_\odd(n)\}
  \end{equation}
  are equidistributed in $(\mathbf Z/2^k\mathbf Z)^*$.
\end{theorem}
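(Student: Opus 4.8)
The plan is to translate the statement into a question about characters of the group $G=(\mathbf Z/2^k\mathbf Z)^*$ and then induct on $k$. By Corollary~\ref{lemma:f-value}, as $w$ runs over $F_\odd(n)$ the multiset $\{f_w\}$ is exactly
\[
  \Big\{\,\prod_{i\in S}(2i+1)\ :\ S\subseteq\{0,1,\dots,m-1\}\,\Big\},\qquad m=\lfloor n/2\rfloor,
\]
since choosing $a_i=2$ at the positions $i\in S$ contributes the factor $2i+1$, while the optional leading $1$ changes nothing. Thus the theorem is purely number-theoretic: the residues modulo $2^k$ of the products of distinct members of $\{1,3,\dots,2m-1\}$ equidistribute over $G$ as soon as $m\ge 2^{k-2}+1$, which is equivalent to $n\ge 2^{k-1}+2$.

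First I would record the Fourier criterion. Each odd $2i+1$ lies in $G$, and for a character $\chi$ of $G$ the character sum over the above multiset factors as $\prod_{i=0}^{m-1}\bigl(1+\chi(2i+1)\bigr)$. Equidistribution over $G$ is equivalent to the vanishing of this product for every nontrivial $\chi$, and, since we work over $\mathbf C$, the product vanishes if and only if $\chi(2i+1)=-1$ for at least one $i\in\{0,\dots,m-1\}$. Hence the theorem reduces to the following claim: if $m\ge 2^{k-2}+1$, then for every nontrivial character $\chi$ of $G$ there is an odd number $2i+1\le 2m-1$ with $\chi(2i+1)=-1$.

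I would prove this claim by induction on $k$, using the standard structure $G=\langle-1\rangle\times\langle5\rangle$ with $\langle5\rangle$ cyclic of order $2^{k-2}$ for $k\ge3$; the cases $k\le2$ are checked directly and serve as the base. Split the nontrivial characters according to whether they factor through the reduction $G\to(\mathbf Z/2^{k-1}\mathbf Z)^*$, whose kernel is $\langle 1+2^{k-1}\rangle=\langle 5^{2^{k-3}}\rangle$. A character factors through precisely when $\chi(5)$ has order at most $2^{k-3}$; for such an \emph{old} character the corresponding character of $(\mathbf Z/2^{k-1}\mathbf Z)^*$ is again nontrivial, the product $\prod_{i=0}^{m-1}\bigl(1+\chi(2i+1)\bigr)$ depends only on residues modulo $2^{k-1}$, and it already vanishes by the inductive hypothesis since $m\ge2^{k-2}+1\ge2^{k-3}+1$. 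The remaining \emph{new} characters are exactly those for which $\chi(5)$ is a primitive $2^{k-2}$-th root of unity. For these I exhibit a single uniform witness: from the congruence $5^{2^{k-3}}\equiv1+2^{k-1}\pmod{2^k}$, the odd number $a=2^{k-1}+1=5^{2^{k-3}}$ satisfies $\chi(a)=\chi(5)^{2^{k-3}}=-1$, regardless of $\chi(-1)$, because $\chi(5)$ has order $2^{k-2}$. Writing $a=2i_0+1$ with $i_0=2^{k-2}$, the index $i_0$ lies in $\{0,\dots,m-1\}$ precisely when $m\ge2^{k-2}+1$, and this kills the product. This proves the claim and simultaneously shows the bound is sharp.

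The main obstacle is really the passage to the character criterion together with the old/new dichotomy; once one notices that the single odd number $2^{k-1}+1=5^{2^{k-3}}$ is a common witness for \emph{every} new character, the numerical threshold $n\ge2^{k-1}+2$ falls out immediately. Finally, I would note that Lemma~\ref{corollary:one-step} guarantees equidistribution persists for all larger $n$, so it is enough to establish it at the threshold.
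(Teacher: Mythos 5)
Your proof is correct, but it takes a genuinely different route from the paper's. The paper never leaves the combinatorics of the Macdonald tree: it inducts on $k$, uses Lemma~\ref{corollary:one-step} to reduce to the single row $n = 2^{k-1}+2$, and then invokes the recursion of Theorem~\ref{theorem:f-val} to pair off the elements of that row as $\{11v, 2v\}$ with $f$-values $f_v$ and $(2^{k-1}+1)f_v$; since $(2^{k-1}+1)i \equiv 2^{k-1}+i \mod 2^k$, each residue class modulo $2^{k-1}$ (equidistributed by induction) splits evenly between the two classes modulo $2^k$ lying over it. You instead pass immediately to the number-theoretic reformulation (essentially Corollary~\ref{corollary:number-theory}, which the paper derives as a consequence rather than using as a tool) and run Fourier analysis on $(\mathbf Z/2^k\mathbf Z)^*$: the character sum factors as $\prod_{i=0}^{m-1}\bigl(1+\chi(2i+1)\bigr)$, equidistribution becomes the existence, for each nontrivial $\chi$, of a witness with $\chi(2i+1)=-1$, and the old/new character dichotomy together with $2^{k-1}+1 \equiv 5^{2^{k-3}} \mod 2^k$ supplies that witness. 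It is worth noting that both proofs pivot on the same integer $2^{k-1}+1$ entering the picture at exactly the threshold rank --- as the multiplier between paired children in the paper, as the witness odd factor in your argument. What each buys: the paper's proof is elementary and purely combinatorial, needing nothing beyond the tree recursion; yours makes the threshold completely transparent, applies uniformly to every $n \geq 2^{k-1}+2$ in one stroke (so your closing appeal to Lemma~\ref{corollary:one-step} is actually redundant), and connects the result to the structure of $(\mathbf Z/2^k\mathbf Z)^*$. One caveat: your parenthetical claim that the argument ``simultaneously shows the bound is sharp'' is not justified as written --- showing that the particular witness $i_0 = 2^{k-2}$ is out of range when $m = 2^{k-2}$ does not rule out other witnesses for the new characters --- but sharpness is not part of the statement, so this does not affect the validity of your proof.
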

\begin{proof}
  The proof proceeds by induction on $k$.
  The result holds trivially for $k=1$.

  Take $k>1$.
  By Lemma~\ref{corollary:one-step}, it suffices to show that the residues in~(\ref{eq:3}) equidistribute for $n=2^{k-1} +2$.
  By the induction hypothesis and Lemma~\ref{corollary:one-step}, we know that the residues 
  \begin{equation}
    \label{eq:4}
    \{f_w \mod 2^{k-1} \mid w \in F_\odd(2^{k-1}+2)\}
  \end{equation}
  equidistribute in $(\mathbf Z/2^{k-1}\mathbf Z)^*$.
  Therefore it suffices to show that, for each odd integer $0<i<2^{k-1}$,
  the number of occurrences of $i$ modulo $2^k$ is the same as the number of occurrences of $2^{k-1} + i$ modulo $2^k$ in (\ref{eq:4}).
  Observe that
  \begin{displaymath}
    2^{k-1} + i \equiv (2^{k-1} + 1)i \mod 2^k.
  \end{displaymath}
  By Theorem~\ref{theorem:f-val}, for each $w\in F_\odd(2^{k-1}+1)$, the two odd words in $w^+$ have residues $f_w$ and $(2^{k-1} + 1)f_w$.
  If one  of these residues modulo $2^k$ is $i$ for some $0<i<2^{k-1}$, then the other is $(2^k + i)$ modulo $2^k$.
  This concludes the proof of the theorem.
\end{proof}
In view of Corollary~\ref{lemma:f-value}, our main theorem about the residues modulo $2^k$ of the $f$-numbers of elements of $F_\odd(n)$ has a purely number-theoretic formulation:
\begin{corollary}
  \label{corollary:number-theory}
  Let $n$ and $k$ be positive integers.
  Consider the multiset of products of odd integers:
  \begin{displaymath}
    \Pi_n = \{x_1x_2\dotsb x_t\mid 1\leq x_1 <\dotsb < x_t\leq n, \: x_j \text{ odd for } 1\leq j \leq t\}.
  \end{displaymath}
  (The empty product, which is taken to be $1$, is included, so $\Pi_n$ has $2^{\lfloor n/2\rfloor}$ elements.)
  For $i=1,3,5,\dots,2^k-1$, let $m_i$ denote the number (counted with multiplicity) of elements of $\Pi_n$ that are congruent to $i$ modulo $2^k$.
  If $n\geq 2^{k-1} + 2$, then all the numbers $m_i$ are equal.
\end{corollary}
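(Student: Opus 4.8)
The plan is to deduce this corollary directly from Theorem~\ref{theorem:main}, by recognising $\Pi_n$ as the multiset of $f$-numbers occurring in a single row of the Macdonald tree; once that identification is in place, the number-theoretic assertion becomes a verbatim transcription of the equidistribution already proved for $F_\odd$. The bridge is Corollary~\ref{lemma:f-value}. I would fix a rank $n'$ and set $\ell=\lfloor n'/2\rfloor$ (reserving $k$ for the exponent of the modulus $2^k$). By Corollary~\ref{corollary:odd-words} every $w\in F_\odd(n')$ is a word $a_{\ell-1}\dotsb a_0$ or $1a_{\ell-1}\dotsb a_0$ with each $a_i\in\{11,2\}$, and by Corollary~\ref{lemma:f-value} its $f$-number is $\prod_{i\in S}(2i+1)$, where $S=\{i\mid a_i=2\}$. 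As $w$ ranges over $F_\odd(n')$, the subset $S$ ranges over all subsets of $\{0,1,\dotsc,\ell-1\}$, each occurring exactly once, and $i\mapsto 2i+1$ carries $\{0,\dotsc,\ell-1\}$ bijectively onto $\{1,3,\dotsc,2\ell-1\}$. Hence $S\mapsto\prod_{i\in S}(2i+1)$ identifies $\{f_w\mid w\in F_\odd(n')\}$, as a multiset, with the multiset of products of distinct elements of $\{1,3,5,\dotsc,2\ell-1\}$.

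Next I would match this factor set against the odd integers $\le n$ that generate $\Pi_n$, taking care of parity. If $n=2m$ is even, the odd integers $\le n$ are exactly $1,3,\dotsc,2m-1$, which is the factor set for $\ell=m$; thus $\Pi_n=\{f_w\mid w\in F_\odd(n)\}$ with $n'=n$. If $n=2m+1$ is odd, the odd integers $\le n$ are $1,3,\dotsc,2m+1$, which is the factor set for $\ell=m+1$, that is, for the even rank $n'=2m+2=n+1$; thus $\Pi_n=\{f_w\mid w\in F_\odd(n+1)\}$. In both cases the matched rank is $n'=2\lceil n/2\rceil$, so that $n'\ge n$; note that the factor $1=2\cdot 0+1$ sits harmlessly in both multisets and does not affect the identification.

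With the multiset identity secured, the conclusion is immediate. Under the hypothesis $n\ge 2^{k-1}+2$, the matched rank satisfies $n'=2\lceil n/2\rceil\ge n\ge 2^{k-1}+2$, so Theorem~\ref{theorem:main} applies at rank $n'$ and gives that $\{f_w \mod 2^k\mid w\in F_\odd(n')\}$ is equidistributed in $(\mathbf Z/2^k\mathbf Z)^*$. Transporting this equidistribution across the identity $\Pi_n=\{f_w\mid w\in F_\odd(n')\}$ yields that the counts $m_i$ are all equal, as asserted.

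I expect the only delicate point to be the parity bookkeeping of the second paragraph: one must notice that an odd value of $n$ corresponds to the \emph{even} rank $n+1$ of the Young-Fibonacci graph rather than to rank $n$ itself, since $F_\odd(n)$ and $F_\odd(n+1)$ carry the same $f$-numbers precisely when $n$ is even. All of the substantive content is supplied by Theorem~\ref{theorem:main}; what remains is a clean matching of index sets, so I anticipate no further obstacle.
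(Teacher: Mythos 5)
Your proof is correct and follows the same route as the paper: identify $\Pi_n$ with a row of the $f$-valued Macdonald tree via Corollaries~\ref{corollary:odd-words} and~\ref{lemma:f-value}, then invoke Theorem~\ref{theorem:main}. In fact, your parity bookkeeping is more careful than the paper's own one-line proof, which asserts that $\Pi_n$ coincides with the labels of the $n$th row of the tree; that identification is valid only for even $n$, since for odd $n=2m+1$ the odd integers up to $n$ are $1,3,\dotsc,2m+1$, so $\Pi_n$ has $2^{m+1}=2^{\lceil n/2\rceil}$ elements (not $2^{\lfloor n/2\rfloor}$, as the parenthetical in the statement claims) and coincides instead with the labels of row $n+1$, exactly as you observe. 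Your final remark that the matched rank $n'=2\lceil n/2\rceil$ still satisfies $n'\geq 2^{k-1}+2$ is what closes this gap, so your write-up repairs a small off-by-one slip present in both the paper's statement and its proof.
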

\begin{proof}
  By corollaries~\ref{corollary:odd-words} and~\ref{lemma:f-value}, the multiset $\Pi_n$ coincides with the labels in the $n$th row of the $f$-valued Macdonald tree, so the corollary follows from Theorem~\ref{theorem:main}.  
\end{proof}

\section{A note on other primes}
\label{sec:others}

The following generalization of Corollary~\ref{corollary:odd-words} holds for any prime number $p$:
\begin{theorem}
  Let $p$ be a prime number.
  A word $w$ in $F$ has $f_w$ coprime to $p$ if and only if $w$ is of the form $a_0a_1\dotsb a_k$, where $a_0$ is of rank less than $p$, and $a_i$ has rank $p$ for $i=1,\dotsc,k$.
\end{theorem}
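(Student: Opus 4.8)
The plan is to generalise the argument behind Corollary~\ref{corollary:odd-words}, replacing the parity condition by a divisibility condition modulo $p$. First I would invoke the hook-length formula (\ref{eq:2}): writing $w = x_1x_2\dotsb x_l$ and setting $r_i = \sum_{j=i}^l x_j$ for the rank of the suffix beginning at position $i$, we have $f_w = \prod_{\{i\mid x_i=2\}}(r_i - 1)$. Since $p$ is prime, $f_w$ is coprime to $p$ if and only if each factor is, that is, if and only if $r_i \not\equiv 1 \mod p$ for every position $i$ carrying a $2$.

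Next I would recast this condition geometrically. Reading $w$ from right to left produces a lattice walk $0 = r_{l+1}, r_l, \dotsc, r_1 = r(w)$ whose steps $r_i - r_{i+1} = x_i$ equal $+1$ (for a letter $1$) or $+2$ (for a letter $2$). A letter $2$ at position $i$ makes the walk step from $r_{i+1}$ to $r_{i+1} + 2 = r_i$, leaping over the single integer $r_i - 1$. Thus the factors appearing in $f_w$ are exactly the integers in $[1, r(w)-1]$ that the walk skips, and $f_w$ is coprime to $p$ if and only if the walk skips no multiple of $p$; equivalently, the walk lands on every multiple of $p$ in $[1, r(w)]$.

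Finally I would translate this visiting condition into the claimed block decomposition. Write $n = r(w)$. If the walk meets each of $p, 2p, \dotsc, \lfloor n/p\rfloor p$, these landing points fall between letters of $w$ and cut it into consecutive sub-words: reading right to left, the segment of the walk from $(m-1)p$ to $mp$ is a block of rank $p$, while the final segment, from $\lfloor n/p\rfloor p$ up to $n$, is a block of rank $n - \lfloor n/p\rfloor p < p$. Renumbering from the left gives $w = a_0 a_1 \dotsb a_k$ with $a_0$ of rank $n \mod p$ and each $a_i$ ($i\geq 1$) of rank $p$. Conversely, such a decomposition forces the suffix ranks at block boundaries to be precisely $p, 2p, \dotsc$, and within a block of rank $p$ the suffix ranks lie strictly between two consecutive multiples of $p$, so no $2$-step interior to a block can skip a multiple of $p$; hence $f_w$ is coprime to $p$.

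I expect the main obstacle to be the careful bookkeeping in this last translation: one must respect the right-to-left orientation, confirm that every landing point of the walk genuinely falls between two letters (so that the blocks are honest sub-words), and verify in the converse direction that a multiple of $p$ could only be skipped at a block boundary, which never occurs because each interior suffix rank is trapped strictly between consecutive multiples of $p$. The case $p = 2$ recovers Corollary~\ref{corollary:odd-words}, which serves as a useful sanity check throughout.
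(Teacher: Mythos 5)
Your proof is correct, and it follows exactly the route the paper intends: the paper states this theorem \emph{without proof}, presenting it as the evident generalization of Corollary~\ref{corollary:odd-words} via the hook-length formula~(\ref{eq:2}), and your argument --- the factors of $f_w$ are the skipped suffix ranks minus one at positions carrying a $2$, so coprimality to $p$ is equivalent to the suffix-rank walk landing on every multiple of $p$, which is equivalent to the block decomposition into a prefix of rank less than $p$ followed by blocks of rank $p$ --- is precisely that generalization made explicit, with both directions (including the boundary bookkeeping) checked correctly. Your lattice-walk formulation is simply a clean way of organizing the same suffix-rank computation the paper uses for the $p=2$ case.
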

As a result, we have:
\begin{theorem}
  Let $p$ be any prime number, and let $n$ be a positive integer of the form $n = pm +r$, where $0\leq r <p$.
  The number $C_p(n)$ of words $w\in F(n)$ for which $f_w$ is coprime to $p$ satisfies the identity:
  \begin{displaymath}
    C_p(n) = C_p(p)^m C_p(r).
  \end{displaymath}
  Thus the sequence $\{C_p(n)\}_{n=1}^\infty$ is determined by its first $p$ values.
\end{theorem}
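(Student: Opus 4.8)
The plan is to turn the structural description in the preceding theorem into a rank-respecting bijection and then read off the count by the product principle. Write $n = pm + r$ with $0 \le r < p$. If $w \in F(n)$ has $f_w$ coprime to $p$, the preceding theorem gives a factorization $w = a_0 a_1 \dotsb a_k$ with $r(a_0) < p$ and $r(a_i) = p$ for $1 \le i \le k$. Comparing ranks gives $r(a_0) + pk = pm + r$, and since $0 \le r(a_0) < p$ and $0 \le r < p$, this forces $r(a_0) = r$ and $k = m$. Thus every word of rank $n$ with $f_w$ coprime to $p$ splits as a prefix of rank exactly $r$ followed by exactly $m$ consecutive factors of rank exactly $p$.

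First I would check that this factorization is unique. The block ranks $(r, p, \dotsc, p)$ are forced, so the cut positions are the partial sums $r, r+p, \dotsc, r+(m-1)p$, which depend on $w$ alone; a word can be cut at a prescribed cumulative rank in at most one way. Conversely, concatenating a word $a_0$ of rank $r$ with words $a_1, \dotsc, a_m$ of rank $p$ produces a well-defined word of rank $n$, and the two operations are mutually inverse. Hence the set of words of rank $n$ coprime to $p$ is in bijection with the set of tuples $(a_0; a_1, \dotsc, a_m)$ in which $a_0$ is a word of rank $r$ coprime to $p$ and each $a_i$ is a word of rank $p$ coprime to $p$.

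The point that genuinely needs the hook-length formula~(\ref{eq:2}) is that these maps preserve coprimality in both directions, and this is the step I expect to require the most care. By (\ref{eq:2}), $p \mid f_w$ exactly when some letter $x_i = 2$ has suffix rank $\sum_{j \ge i} x_j \equiv 1 \pmod p$. Because each tail $a_i a_{i+1} \dotsb a_m$ has rank divisible by $p$, the suffix rank of a $2$ lying in a given block is congruent modulo $p$ to its suffix rank computed inside that block alone; consequently $p \nmid f_w$ if and only if $p \nmid f_{a_i}$ for every $i$. (In fact the suffix rank of any $2$ inside a block of rank $\le p$ lies in $\{2, \dotsc, p\}$, so no such residue is ever $1$, and every block is automatically coprime to $p$; this is what makes both directions of the bijection land in the claimed sets.) This multiplicativity of the coprimality condition across blocks is the engine behind the product formula.

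With the bijection established, the count is immediate: there are $C_p(r)$ choices for $a_0$ and $C_p(p)$ choices for each of the $m$ factors of rank $p$, so $C_p(n) = C_p(p)^m C_p(r)$ by the product principle. Finally, since $0 \le r < p$ and $C_p(0) = 1$ (the empty word has $f_\emptyset = 1$), every value $C_p(n)$ is expressed in terms of $C_p(1), \dotsc, C_p(p)$, so the sequence $\{C_p(n)\}_{n=1}^\infty$ is determined by its first $p$ terms.
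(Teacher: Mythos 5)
Your proof is correct and follows exactly the route the paper intends: the paper states this theorem as an unproved direct consequence (``As a result, we have'') of the preceding structure theorem, and your argument---the forced block ranks, uniqueness of the factorization at the prescribed cut ranks, the resulting bijection with tuples, and the product principle---is precisely the fleshed-out version of that deduction. Your side observation that every block of rank at most $p$ is automatically coprime to $p$ (so the bijection is really with $F(r)\times F(p)^m$) is accurate and fills in a detail the paper leaves implicit.
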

However residues modulo primes $p>2$ do not equidistribute.
\subsection*{Acknowledgements}
We thank Arvind Ayyer, R. Balasubramanian, Kamalakshya Mahatab, Anirban Mukhopadhyay, Parameswaran Sankaran, and Steven Spallone for many helpful discussions. The second author was supported by a Swarnajayanti Fellowship of the Department of Science \& Technology (India). The third author thanks the Institute of Mathematical Sciences, Chennai, for offering him a summer internship during which this project was begun.
\bibliographystyle{abbrv}
\bibliography{refs}
\end{document}